\newtheorem{theorem}{Theorem}
\newtheorem{corollary}{Corollary}
\begin{document}
\author{L. E. Persson and G. Tephnadze}
\title[Fejér means]{A note on Vilenkin-Fejér means on the Martingale Hardy
spaces $H_{p}$}
\address{L.-E. Persson, Department of Engineering Sciences and Mathematics,
Lule\aa\ University of Technology, SE-971 87 Lule\aa , Sweden and Narvik
University College, P.O. Box 385, N-8505, Narvik, Norway.}
\email{larserik@ltu.se}
\address{G. Tephnadze, Department of Mathematics, Faculty of Exact and
Natural Sciences, Tbilisi State University, Chavchavadze str. 1, Tbilisi
0128, Georgia}
\email{giorgitephnadze@gmail.com}
\thanks{The research was supported by Shota Rustaveli National Science
Foundation grant no.52/54 (Bounded operators on the martingale Hardy spaces).%
}
\date{}
\maketitle

\begin{abstract}
The main aim of this note is to derive necessary and sufficient conditions
for the convergence of Fejér means in terms of the modulus of continuity of
the Hardy spaces $H_{p},$ $\left( 0<p\leq 1\right) $.
\end{abstract}

\date{}

\textbf{2010 Mathematics Subject Classification.} 42C10.

\noindent \textbf{Key words and phrases:} Vilenkin system, Vilenkin-Fejér
means, martingale Hardy space.

\section{Introduction and Preliminary Results}

Let $\mathbb{P}_{+}$ denote the set of the positive integers and $\mathbb{P}%
:=\mathbb{P}_{+}\cup \{0\}.$

Let $m:=(m_{0,}m_{1},\dots)$ denote a sequence of the positive integers not
less than 2.

Denote by
\begin{equation*}
Z_{m_{k}}:=\{0,1,\dots ,m_{k}-1\}
\end{equation*}
the additive group of integers modulo $m_{k}.$

Define the group $G_{m}$ as the complete direct product of the group $%
Z_{m_{j}}$ with the product of the discrete topologies of $Z_{m_{j}}$`s.

The direct product $\mu $ of the measures
\begin{equation*}
\mu _{k}\left( \{j\}\right) :=1/m_{k},(j\in Z_{m_{k}})
\end{equation*}
is the Haar measure on $G_{m_{\text{ }}}$with $\mu \left( G_{m}\right) =1.$

\textbf{In this paper we consider bounded Vilenkin groups only, which are
defined by the condition }$\sup_{n}m_{n}<\infty .$

The elements of $G_{m}$ are represented by sequences
\begin{equation*}
x:=(x_{0},x_{1},\dots ,x_{k},\dots )\qquad \left( \text{ }x_{k}\in
Z_{m_{k}}\right) .
\end{equation*}

It is easy to give a base for the neighbourhoods of $G_{m}:$
\begin{equation*}
I_{0}\left( x\right) :=G_{m},\text{ \ }I_{n}(x):=\{y\in G_{m}\mid
y_{0}=x_{0},\dots ,y_{n-1}=x_{n-1}\}\text{ }(x\in G_{m},\text{ }n\in \mathbb{%
P}).
\end{equation*}%
Denote $I_{n}:=I_{n}\left( 0\right) $ and $\overline{I_{n}}:=G_{m}$ $%
\backslash $ $I_{n},$ for $n\in \mathbb{P}$. Let $e_{n}:=\left( 0,\dots
,0,x_{n}=1,0,\dots \right) \in G_{m},$ $\left( n\in \mathbb{P}\right) .$

The norm (or quasi-norm) of the space $L_{p}(G_{m})$ is defined by \qquad
\qquad \thinspace\
\begin{equation*}
\left\Vert f\right\Vert _{p}:=\left( \int_{G_{m}}\left\vert f(x)\right\vert
^{p}d\mu (x)\right) ^{1/p}\qquad \left( 0<p<\infty \right) .
\end{equation*}

The space $weak-L_{p}\left( G_{m}\right) $ consists of all measurable
functions $f,$ for which
\begin{equation*}
\left\Vert f\right\Vert _{weak-L_{p}}^{p}:=\underset{\lambda >0}{\sup }%
\lambda ^{p}\mu \left( f>\lambda \right) <+\infty .
\end{equation*}

If we define the so-called generalized number system based on $m$ in the
following way:
\begin{equation*}
M_{0}:=1,\text{ \qquad }M_{k+1}:=m_{k}M_{k\text{ }}\ \qquad (k\in \mathbb{P}%
),
\end{equation*}%
then every $n\in \mathbb{P}$ can be uniquely expressed as $%
n=\sum_{j=0}^{\infty }n_{j}M_{j},$ where $n_{j}\in Z_{m_{j}}$ $~(j\in
\mathbb{P})$ and only a finite number of $n_{j}`$s differ from zero. Let $%
\left\vert n\right\vert :=\max $ $\{j\in \mathbb{P};$ $n_{j}\neq 0\}.$

Next, we define the complex valued function $r_{k}\left( x\right)
:G_{m}\rightarrow \mathbb{C},$ called the generalized Rademacher functions
in the following way:
\begin{equation*}
r_{k}\left( x\right) :=\exp \left( 2\pi \imath x_{k}/m_{k}\right) \text{
\qquad }\left( \imath ^{2}=-1,\text{ }x\in G_{m},\text{ }k\in \mathbb{P}%
\right) .
\end{equation*}

Moreover, the Vilenkin system $\psi :=(\psi _{n}:n\in \mathbb{P})$ on $G_{m}$
is defined as follows:
\begin{equation*}
\psi _{n}:=\prod_{k=0}^{\infty }r_{k}^{n_{k}}\left( x\right) \text{ \qquad }%
\left( n\in \mathbb{P}\right) .
\end{equation*}

In particular, we call this system the Walsh-Paley one when $m\equiv 2.$ It
is known that the Vilenkin system is orthonormal and complete in $%
L_{2}\left( G_{m}\right) \,$(see e.g. \cite{AVD,Vi}).

Hence we can introduce analogues of the usual definitions in
Fourier-analysis. If $f\in L_{1}\left( G_{m}\right) $ we can define the
Fourier coefficients, the partial sums of the Fourier series, the Fejér
means, the Dirichlet and Fejér kernels with respect to the Vilenkin system
in the usual manner:
\begin{eqnarray*}
\widehat{f}\left( n\right) &:&=\int_{G_{m}}f\overline{\psi }_{n}d\mu ,\text{%
\ \ \ \ }S_{n}f:=\sum_{k=0}^{n-1}\widehat{f}\left( k\right) \psi _{k},\text{
\ \ \ }\sigma _{n}f:=\frac{1}{n}\sum_{k=1}^{n}S_{k}f, \\
D_{n} &:&=\sum_{k=0}^{n-1}\psi _{k},\text{ \ \ \ \ \ \ \ \ \ \ }K_{n}:=\frac{%
1}{n}\overset{n}{\underset{k=1}{\sum }}D_{k}\text{ \ \thinspace\ }\left(
n\in \mathbb{P}_{+}\right) .
\end{eqnarray*}

Recall that
\begin{equation}
\quad \hspace*{0in}D_{M_{n}}\left( x\right) =\left\{
\begin{array}{ll}
M_{n}, & \text{if\thinspace \thinspace }x\in I_{n}, \\
0, & \text{\thinspace if \thinspace \thinspace }x\notin I_{n}.%
\end{array}%
\right.  \label{2}
\end{equation}%
\vspace{0pt}

The $\sigma $-algebra generated by the intervals $\left\{ I_{n}\left(
x\right) :x\in G_{m}\right\} $ will be denoted by $\digamma _{n}$ $\left(
n\in \mathbb{P}\right) .$ Denote by $f=\left( f^{\left( n\right) },n\in
\mathbb{P}\right) $ the martingale with respect to $\digamma _{n}$ $\left(
n\in \mathbb{P}\right) $ (for details see e.g. \cite{We1}). The maximal
function of the martingale $f$ is defined by \qquad
\begin{equation*}
f^{\ast }\left( x\right) =\sup_{n\in \mathbb{P}}\left\vert f^{\left(
n\right) }\left( x\right) \right\vert .
\end{equation*}

In the case $f\in L_{1}(G_{m}),$ the maximal functions can also be given by
\begin{equation*}
f^{\ast }\left( x\right) =\sup_{n\in \mathbb{P}}\frac{1}{\left\vert
I_{n}\left( x\right) \right\vert }\left\vert \int_{I_{n}\left( x\right)
}f\left( u\right) d\mu \left( u\right) \right\vert
\end{equation*}

For $0<p<\infty $ the Hardy martingale spaces $H_{p}$ $\left( G_{m}\right) $
consist of all martingales for which
\begin{equation}
\left\Vert f\right\Vert _{H_{p}}:=\left\Vert f^{\ast }\right\Vert
_{p}<\infty .  \label{maxfn}
\end{equation}

If $f\in L_{1}(G_{m}),$ then it is easy to show that the sequence $\left(
S_{M_{n}}\left( f\right) :n\in \mathbb{P}\right) $ is a martingale. If $%
f=\left( f^{\left( n\right) },n\in \mathbb{P}\right) $ is a martingale, then
the Vilenkin-Fourier coefficients must be defined in a slightly different
manner: $\qquad \qquad $
\begin{equation*}
\widehat{f}\left( i\right) :=\lim_{k\rightarrow \infty
}\int_{G_{m}}f^{\left( k\right) }\left( x\right) \overline{\psi }_{i}\left(
x\right) d\mu \left( x\right) .
\end{equation*}

The Vilenkin-Fourier coefficients of $f\in L_{1}\left( G_{m}\right) $ are
the same as those of the martingale $\left( S_{M_{n}}\left( f\right) :n\in
\mathbb{P}\right) $ obtained from $f$.

For the martingale $f$ \ we consider the following maximal operators:%
\begin{equation*}
\sigma ^{\ast }f:=\sup_{n\in \mathbb{P}}\left\vert \sigma _{n}f\right\vert ,%
\text{\ \ \ }\sigma ^{\#}f:=\sup_{n\in \mathbb{P}}\left\vert \sigma
_{M_{n}}f\right\vert ,\text{ \ \ }\widetilde{\sigma }_{p}^{\ast
}:=\sup_{n\in \mathbb{P}_{+}}\frac{\left\vert \sigma _{n}\right\vert }{%
n^{1/p-2}\log ^{2\left[ 1/2+p\right] }\left( n+1\right) },\text{\ }
\end{equation*}%
where $0<p\leq 1/2$ and $\left[ 1/2+p\right] $ denotes the integer part of $%
1/2+p.$

A weak type-$\left( 1,1\right) $ inequality for the maximal operator of Fejé%
r means $\sigma ^{\ast }$ can be found in Schipp \cite{Sc} for Walsh series
and in Pál, Simon \cite{PS} for bounded Vilenkin series. Fujji \cite{Fu} and
Simon \cite{Si2} verified that $\sigma ^{\ast }$ is bounded from $H_{1}$ to $%
L_{1}$.

Weisz \cite{We2} generalized this result and proved the following:

\textbf{Theorem W1 (Weisz). }The maximal operator $\sigma ^{\ast }$ \ is
bounded from the martingale space $H_{p}$ to the space $L_{p}$ for $p>1/2$.

Simon \cite{Si1} gave a counterexample, which shows that boundedness does
not hold for $0<p<1/2.$ The counterexample for $p=1/2$ is due to Goginava
\cite{Go}, (see also \cite{BGG2}). Weisz \cite{we4} proved that $\sigma
^{\ast }$ is bounded from the Hardy space $H_{1/2}$ to the space $%
L_{weak-1/2}$.

In \cite{tep2} and \cite{tep3} (for Walsh system see \cite{GoSzeged}) it was
proved that the maximal operators $\widetilde{\sigma }_{p}^{\ast }$ with
respect to Vilenkin systems, where $0<p\leq 1/2$ and $\left[ 1/2+p\right] $
denotes the integer part of $1/2+p,$ is bounded from the Hardy space $H_{p}$
to the space $L_{p}.$ Moreover, we showed that the order of deviant
behaviour of the $n$-th Fejér means was given exactly. As a corollary it was
pointed out that
\begin{equation}
\left\Vert \sigma _{n}f\right\Vert _{p}\leq c_{p}n^{1/p-2}\log ^{2\left[
1/2+p\right] }n\left\Vert f\right\Vert _{H_{p}},\text{ \ \ }\left(
n=2,3,....\right) .  \label{1}
\end{equation}

Weisz \cite{we2} also proved that the following is true:

\textbf{Theorem W2 (Weisz). }The maximal operator $\sigma ^{\#}f$ \ is
bounded from the martingale Hardy space $H_{p}$ $\left( G_{m}\right) $ to
the space $L_{p}$ $\left( G_{m}\right) $ for $p>0.$

Moreover, he also considered the norm convergence of Fejér means of
Vilenkin-Fourier series and proved the following:

\textbf{Theorem W3 (Weisz).} Let $\ k\in \mathbb{P}.$ Then
\begin{equation*}
\left\Vert \sigma _{k}f\right\Vert _{H_{p}}\leq c_{p}\left\Vert f\right\Vert
_{H_{p}},\ \text{\ }\left( \text{\ }f\in H_{p},\text{ \ \ }p>1/2\right)
\end{equation*}%
and
\begin{equation*}
\left\Vert \sigma _{M_{k}}f\right\Vert _{H_{p}}\leq c_{p}\left\Vert
f\right\Vert _{H_{p}},\ \text{\ }\left( \text{\ }f\in H_{p},\text{ \ \ }%
p>1/2\right)
\end{equation*}

For the Walsh system Goginava \cite{gog1} proved a very unexpected fact:

\textbf{Theorem G1 (Goginava).} Let\textbf{\ }$0<p\leq 1.$\textbf{\ }Then%
\textbf{\ }there exists a martingale $f\in H_{p},$ such that $\ $%
\begin{equation*}
\sup_{n\in \mathbb{P}}\left\Vert \left\vert \sigma _{M_{k}}f\right\vert
\right\Vert _{H_{p}}=+\infty ,\text{ \ }\left( 0<p\leq 1/2\right) .
\end{equation*}

In \cite{tep1} (see also \cite{GoAMH}) it was proved that there exists a
martingale $f\in H_{p},$ such that $\ $%
\begin{equation*}
\sup_{n\in \mathbb{P}}\left\Vert \sigma _{n}f\right\Vert _{H_{p}}=+\infty ,%
\text{ \ }\left( 0<p\leq 1/2\right) .
\end{equation*}

In \cite{tep18} it was proved that the following statements are true:

\textbf{Theorem T2 (Tephnadze). }a)\textbf{\ }Let $0<p\leq 1/2,$ $f\in
H_{p}, $ $M_{N}<n\leq $ $M_{N+1}$ and
\begin{equation}
\omega _{H_{p}}\left( 1/M_{N},f\right) =o\left( 1/M_{N}^{1/p-2}N^{2\left[
1/2+p\right] }\right) ,\text{ as \ }N\rightarrow \infty .  \label{cond}
\end{equation}%
Then
\begin{equation*}
\left\Vert \sigma _{n}f-f\right\Vert _{p}\rightarrow 0,\text{ when }%
n\rightarrow \infty .
\end{equation*}

b) Let $0<p<1/2$ and $M_{N}<n\leq M_{N+1}.$ Then there exists a martingale $%
f\in H_{p}(G_{m}),$\ \ for which
\begin{equation}
\omega _{H_{p}}\left( 1/M_{N},f\right) =O\left( 1/M_{N}^{1/p-2}\right) ,%
\text{ \ as \ }N\rightarrow \infty  \label{cond2}
\end{equation}%
and
\begin{equation*}
\left\Vert \sigma _{n}f-f\right\Vert _{L_{p,\infty }}\nrightarrow 0,\,\,\,%
\text{as\thinspace \thinspace \thinspace }n\rightarrow \infty .
\end{equation*}%
c) Let $M_{N}<n\leq M_{N+1}.$ Then there exists a martingale $f\in
H_{1/2}(G_{m}),$\ \ for which
\begin{equation}
\omega _{H_{1/2}}\left( 1/M_{N},f\right) =O\left( 1/N^{2}\right) ,\text{ \
as \ }N\rightarrow \infty  \label{cond3}
\end{equation}%
and
\begin{equation*}
\left\Vert \sigma _{n}f-f\right\Vert _{1/2}\nrightarrow 0,\,\,\,\text{%
as\thinspace \thinspace \thinspace }n\rightarrow \infty .
\end{equation*}

In this paper we will show that Theorem W3 of Weisz are simple corollary of
Theorems W1 and W2. It is very important, because we do not have definition
of conjugate transform of martingales, with the same properties as Walsh
series. Moreover we will improve inequality (\ref{1}) and show that
\begin{equation*}
\left\Vert \sigma _{n}f\right\Vert _{H_{p}}\leq c_{p}n^{1/p-2}\log ^{2\left[
1/2+p\right] }n\left\Vert f\right\Vert _{H_{p}},\text{ \ \ }\left(
n=2,3,....\right) .
\end{equation*}

On the other hand, it gives chance to generalize Theorem T2 and derive
necessary and sufficient conditions for the convergence of Fejér means in
terms of the modulus of continuity of the Hardy spaces $H_{p},$ $\left(
0<p\leq 1\right) $. We will also generalize Theorem G1 for the bounded
Vilenkin system.

\section{The main result}

\begin{theorem}
a) Let $f\in H_{p},$ where $1/2<p\leq 1.$ Then%
\begin{equation*}
\left\Vert \sigma _{n}f\right\Vert _{H_{p}}\leq c_{p}\left\Vert f\right\Vert
_{H_{p}},\text{ }\left( n\in \mathbb{P}\right) .
\end{equation*}
\end{theorem}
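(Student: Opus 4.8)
The plan is to reduce the $H_{p}$-norm of $\sigma _{n}f$ to the three maximal operators $\sigma ^{\ast }$, $\sigma ^{\#}$ and $f\mapsto f^{\ast }$, all of which are already under control: $\sigma ^{\ast }$ by Theorem W1 (this is where $p>1/2$ enters), $\sigma ^{\#}$ by Theorem W2, and $f^{\ast }$ by the very definition of $\left\Vert \cdot \right\Vert _{H_{p}}$. Since $\sigma _{n}f$ is a Vilenkin polynomial it lies in $L_{1}$, so $\left\Vert \sigma _{n}f\right\Vert _{H_{p}}=\left\Vert \left( \sigma _{n}f\right) ^{\ast }\right\Vert _{p}$ with $\left( \sigma _{n}f\right) ^{\ast }=\sup _{k}\left\vert S_{M_{k}}\left( \sigma _{n}f\right) \right\vert $, and the whole problem becomes a pointwise estimate of $S_{M_{k}}\left( \sigma _{n}f\right)$.

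First I would record the expansion
\[
\sigma _{n}f=\sum_{i=0}^{n-1}\frac{n-i}{n}\,\widehat{f}\left( i\right) \psi _{i},
\]
which holds for an arbitrary martingale $f$ because it only involves the Vilenkin--Fourier coefficients. Applying $S_{M_{k}}$ simply truncates this sum to the indices $i<M_{k}$. Hence, if $M_{k}\geq n$ nothing is removed and $S_{M_{k}}\left( \sigma _{n}f\right) =\sigma _{n}f$, while if $M_{k}<n$, splitting $\frac{n-i}{n}=\frac{n-M_{k}}{n}+\frac{M_{k}-i}{n}$ gives the decomposition
\[
S_{M_{k}}\left( \sigma _{n}f\right) =\frac{n-M_{k}}{n}\,S_{M_{k}}f+\frac{M_{k}}{n}\,\sigma _{M_{k}}f=\frac{n-M_{k}}{n}\,f^{\left( k\right) }+\frac{M_{k}}{n}\,\sigma _{M_{k}}f.
\]
(This is the martingale analogue of the elementary identity $n\sigma _{n}=\sum_{j=1}^{M_{k}}S_{j}+\left( n-M_{k}\right) S_{M_{k}}$ after applying $S_{M_{k}}$.) In either case
\[
\left\vert S_{M_{k}}\left( \sigma _{n}f\right) \right\vert \leq \left\vert \sigma _{n}f\right\vert +\left\vert f^{\left( k\right) }\right\vert +\left\vert \sigma _{M_{k}}f\right\vert \leq \sigma ^{\ast }f+f^{\ast }+\sigma ^{\#}f,
\]
and taking the supremum over $k$ yields the pointwise bound $\left( \sigma _{n}f\right) ^{\ast }\leq \sigma ^{\ast }f+\sigma ^{\#}f+f^{\ast }$.

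The conclusion is then immediate: using the $p$-subadditivity of $\left\Vert \cdot \right\Vert _{p}^{p}$ for $0<p\leq 1$,
\[
\left\Vert \sigma _{n}f\right\Vert _{H_{p}}^{p}=\left\Vert \left( \sigma _{n}f\right) ^{\ast }\right\Vert _{p}^{p}\leq \left\Vert \sigma ^{\ast }f\right\Vert _{p}^{p}+\left\Vert \sigma ^{\#}f\right\Vert _{p}^{p}+\left\Vert f^{\ast }\right\Vert _{p}^{p},
\]
and now Theorem W1 ($p>1/2$), Theorem W2 ($p>0$) and $\left\Vert f^{\ast }\right\Vert _{p}=\left\Vert f\right\Vert _{H_{p}}$ give $\left\Vert \sigma _{n}f\right\Vert _{H_{p}}\leq c_{p}\left\Vert f\right\Vert _{H_{p}}$. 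The only point requiring a little care — the main, though minor, obstacle — is justifying the decomposition identity for general $f\in H_{p}$, which for $p<1$ need not belong to $L_{1}$; this is handled by observing that $\sigma _{n}f$ and $\sigma _{M_{k}}f$ are finite linear combinations of the functions $\psi _{i}$ with coefficients $\widehat{f}\left( i\right)$, so all manipulations take place purely at the level of Fourier coefficients. Everything else is a direct citation of Theorems W1 and W2, which is precisely the sense in which Theorem W3 becomes a simple corollary of them.
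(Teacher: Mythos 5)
Your proof is correct and takes essentially the same route as the paper: represent $\sigma _{n}f$ by its martingale $\left( S_{M_{k}}\sigma _{n}f\right) _{k}$, dominate the maximal function pointwise by $\sigma ^{\#}f$, $f^{\ast }$ and $\left\vert \sigma _{n}f\right\vert $, and then invoke Theorems W1 and W2. In fact your identity $S_{M_{k}}\left( \sigma _{n}f\right) =\frac{n-M_{k}}{n}S_{M_{k}}f+\frac{M_{k}}{n}\sigma _{M_{k}}f$ (for $M_{k}<n$) is the corrected form of the paper's (\ref{fe11}), which omits the term $\frac{n-M_{k}}{n}S_{M_{k}}f$; since that term is bounded by $f^{\ast }$ and $\left\Vert f^{\ast }\right\Vert _{p}=\left\Vert f\right\Vert _{H_{p}}$, your extra care costs nothing and the argument goes through exactly as in the paper.
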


\textit{b) Let } $f\in H_{p},$ \textit{where} $0<p\leq 1/2.$ \textit{Then}%
\begin{equation*}
\left\Vert \sigma _{n}f\right\Vert _{H_{p}}\leq c_{p}n^{1/p-2}\log ^{2\left[
1/2+p\right] }n\left\Vert f\right\Vert _{H_{p}},\text{ \ }\left( n\in
\mathbb{P}\right) .
\end{equation*}%
\textit{c) Let } $f\in H_{p},$ \textit{where} $p>0.$ \textit{Then}%
\begin{equation*}
\left\Vert \sigma _{M_{n}}f\right\Vert _{H_{p}}\leq c_{p}\left\Vert
f\right\Vert _{H_{p}},\text{ \ }\left( n\in \mathbb{P}\right) .
\end{equation*}%
\textit{d)} \textit{Let} $p>1/2$ \textit{and} $f\in H_{p}.$ \textit{Then}%
\begin{equation*}
\left\Vert \sigma _{n}f-f\right\Vert _{H_{p}}\rightarrow 0,\text{ when }%
n\rightarrow \infty .
\end{equation*}

\textit{e) \ Let} $0<p\leq 1/2,$ $f\in H_{p},$ $M_{N}<n\leq $ $M_{N+1}$
\textit{and}
\begin{equation}
\omega _{H_{p}}\left( 1/M_{N},f\right) =o\left( 1/M_{N}^{1/p-2}N^{2\left[
1/2+p\right] }\right) ,\text{ as \ }N\rightarrow \infty .  \label{cond200}
\end{equation}%
\textit{Then}
\begin{equation*}
\left\Vert \sigma _{n}f-f\right\Vert _{H_{p}}\rightarrow 0,\text{ when }%
n\rightarrow \infty .
\end{equation*}

\begin{proof}[Proof of Theorem 1.]
Let $f\in H_{p},$ $p>1$ and $M_{N}\leq n<M_{N+1}.$ Then \
\begin{equation}
E\sigma _{n}f:=\left( S_{M_{k}}\sigma _{n}f:k\geq 0\right) =\left( \frac{%
M_{0}}{n}\sigma _{M_{0}}f,...,\frac{M_{N}}{n}\sigma _{M_{N}}f,\text{ }\sigma
_{n}f\right)  \label{fe11}
\end{equation}%
and
\begin{equation*}
\left( E\sigma _{n}f\right) ^{\ast }\leq \sup_{0\leq k\leq N}\left\vert
\frac{M_{k}}{n}\sigma _{M_{k}}f\right\vert +\left\vert \sigma
_{n}f\right\vert \leq \sigma ^{\#}f+\left\vert \sigma _{n}f\right\vert .
\end{equation*}%
By combining (\ref{maxfn}) and (\ref{1}) we get that%
\begin{eqnarray}
\left\Vert \sigma _{n}f\right\Vert _{H_{p}} &:&=\left\Vert \left( E\sigma
_{n}f\right) ^{\ast }\right\Vert _{p}\leq \left\Vert \sigma
^{\#}f\right\Vert _{p}+\left\Vert \sigma _{n}f\right\Vert _{p}  \label{fe12}
\\
&\leq &c_{p}\left\Vert f\right\Vert _{H_{p}},\text{ \ }\left( 1/2<p\leq
1\right)  \notag
\end{eqnarray}%
and%
\begin{eqnarray}
\left\Vert \sigma _{n}f\right\Vert _{H_{p}} &:&=\left\Vert \left( \sigma
_{n}f\right) ^{\ast }\right\Vert _{p}\leq \left\Vert \sup_{k\in
_{+}}\left\vert \sigma _{M_{k}}f\right\vert \right\Vert _{p}+\left\Vert
\sigma _{n}f\right\Vert _{p}  \label{stm1} \\
&\leq &c_{p}\left( n^{1/p-2}\log ^{2\left[ 1/2+p\right] }n\right) \left\Vert
f\right\Vert _{H_{p}},\text{ \ }\left( 0<p\leq 1/2\right) .  \notag
\end{eqnarray}

On the other hand, if $n=M_{N},$ for some $n\in \mathbb{P},$ by using (\ref%
{fe11}), we obtain that
\begin{equation*}
\left( E\sigma _{M_{N}}f\right) ^{\ast }\leq \sup_{0\leq k\leq N}\left\vert
\frac{M_{k}}{n}\sigma _{M_{k}}f\right\vert \leq \sup_{k\in \mathbb{N}%
_{+}}\left\vert \sigma _{M_{k}}f\right\vert =:\sigma ^{\#}f
\end{equation*}%
and%
\begin{eqnarray}
\left\Vert \sigma _{M_{N}}f\right\Vert _{H_{p}} &:&=\left\Vert \left(
E\sigma _{M_{N}}f\right) ^{\ast }\right\Vert _{p}\leq \left\Vert \sigma
^{\#}f\right\Vert _{p}  \label{e5} \\
&\leq &c_{p}\left\Vert f\right\Vert _{H_{p}},\text{ \ }\left( p>0\right) .
\notag
\end{eqnarray}

It is easy to show that (see \cite{tep18})%
\begin{equation}
\sigma _{n}S_{M_{N}}f-S_{M_{N}}f=\frac{M_{N}}{n}S_{M_{N}}\left( \sigma
_{M_{N}}f-f\right) .  \label{mc1}
\end{equation}

Hence, according to (\ref{mc1}), we have that%
\begin{eqnarray*}
&&\left\Vert \sigma _{n}f-f\right\Vert _{H_{p}} \\
&\leq &c_{p}\left\Vert \sigma _{n}f-\sigma _{n}S_{M_{N}}f\right\Vert
_{H_{p}}+c_{p}\left\Vert \sigma _{n}S_{M_{N}}f-S_{M_{N}}f\right\Vert
_{H_{p}}+c_{p}\left\Vert S_{M_{N}}f-f\right\Vert _{H_{p}} \\
&=&c_{p}\left\Vert \sigma _{n}\left( S_{M_{N}}f-f\right) \right\Vert
_{H_{p}}+c_{p}\left\Vert S_{M_{N}}f-f\right\Vert _{H_{p}}+\frac{c_{p}M_{N}}{n%
}\left\Vert S_{M_{N}}\sigma _{M_{N}}f-S_{M_{N}}f\right\Vert _{H_{p}} \\
&:&=III+IV+V.
\end{eqnarray*}

For $IV$ we have that
\begin{equation*}
IV=c_{p}\omega _{H_{p}}\left( 1/M_{n},f\right) \rightarrow 0,\text{ \ \ \ as
\ \ \ }n\rightarrow \infty ,\text{ \ \ }\left( p>0\right) .
\end{equation*}%
Since \
\begin{equation}
\left\Vert S_{M_{n}}f\right\Vert _{H_{p}}\leq c_{p}\left\Vert f\right\Vert
_{H_{p}},\ \ p>0  \label{sn1}
\end{equation}%
\ we obtain that%
\begin{equation*}
V\leq \left\Vert S_{M_{N}}\left( \sigma _{M_{N}}f-f\right) \right\Vert
_{H_{p}}\leq \left\Vert \sigma _{M_{N}}f-f\right\Vert _{H_{p}}\rightarrow 0,%
\text{ \ \ \ as \ \ \ \ }n\rightarrow \infty .
\end{equation*}

Let $1/2<p\leq 1.$ Then, by using (\ref{fe12}) we obtain that%
\begin{equation*}
III\leq c_{p}\left\Vert S_{M_{N}}f-f\right\Vert _{H_{p}}\leq c_{p}\omega
_{H_{p}}\left( 1/M_{N},f\right) \rightarrow \infty ,\text{ \ \ as \ \ }%
n\rightarrow \infty .
\end{equation*}

On the other hand, for $0<p\leq 1/2$ we can apply (\ref{stm1}) and under
condition (\ref{cond200}) we get that
\begin{equation*}
III\leq c_{p}\left( n^{1/p-2}\log ^{2\left[ 1/2+p\right] }n\right) \omega
_{H_{p}}\left( 1/M_{N},f\right) \rightarrow 0,\text{ \ \ as \ \ }%
n\rightarrow \infty .
\end{equation*}

The proof is complete.
\end{proof}

\begin{theorem}
Let $0<p\leq 1.$ Then the operator $\left\vert \sigma _{M_{n}}f\right\vert $
is not bounded from the martingale Hardy space $H_{p}$ $\left( G_{m}\right) $
to the martingale Hardy space $H_{p}$ $\left( G_{m}\right) .$
\end{theorem}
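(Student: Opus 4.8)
The plan is to prove that no constant $c$ can satisfy $\left\Vert \left\vert \sigma _{M_{n}}f\right\vert \right\Vert _{H_{p}}\leq c\left\Vert f\right\Vert _{H_{p}}$ for all $n$ and all $f$. Since by part c) of Theorem 1 the operators $\sigma _{M_{n}}$ are uniformly bounded on $H_{p}$, it is enough to exhibit, for every $N\in \mathbb{P}$, one $p$-atom $a_{N}$ (so that $\left\Vert a_{N}\right\Vert _{H_{p}}\leq c_{p}$) for which $\left\Vert \left\vert \sigma _{M_{N+1}}a_{N}\right\vert \right\Vert _{H_{p}}\rightarrow \infty $ as $N\rightarrow \infty $. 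I would use
\begin{equation*}
a_{N}:=\frac{M_{N}^{1/p-1}}{m_{N}-1}\bigl( D_{M_{N+1}}-D_{M_{N}}\bigr) .
\end{equation*}
By (\ref{2}) the function $D_{M_{N+1}}-D_{M_{N}}$ equals $(m_{N}-1)M_{N}$ on $I_{N+1}$, equals $-M_{N}$ on $I_{N}\setminus I_{N+1}$, and vanishes on $\overline{I_{N}}$; hence $a_{N}$ is supported in $I_{N}$, $\left\Vert a_{N}\right\Vert _{\infty }=M_{N}^{1/p}=\left\vert I_{N}\right\vert ^{-1/p}$, and $\int_{G_{m}}a_{N}\,d\mu =\widehat{a_{N}}(0)=0$. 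Thus $a_{N}$ is a $p$-atom, and $\left\Vert a_{N}\right\Vert _{H_{p}}\leq c_{p}$ by the atomic characterization of $H_{p}$ (see \cite{We1}).

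The one genuine computation is the value of $\sigma _{M_{N+1}}a_{N}$ on $I_{N+1}$. Here $\widehat{a_{N}}(i)=M_{N}^{1/p-1}/(m_{N}-1)$ for $M_{N}\leq i<M_{N+1}$ and $\widehat{a_{N}}(i)=0$ otherwise, so $\sigma _{M_{N+1}}a_{N}=\frac{M_{N}^{1/p-1}}{m_{N}-1}\sum_{i=M_{N}}^{M_{N+1}-1}\bigl( 1-i/M_{N+1}\bigr) \psi _{i}$; and since every $\psi _{i}$ with $i<M_{N+1}$ is identically $1$ on $I_{N+1}$, the function $\sigma _{M_{N+1}}a_{N}$ is constant on $I_{N+1}$ and there equals
\begin{equation*}
\frac{M_{N}^{1/p-1}}{(m_{N}-1)M_{N+1}}\sum_{i=M_{N}}^{M_{N+1}-1}\bigl( M_{N+1}-i\bigr) =\frac{(m_{N}-1)M_{N}^{1/p}+M_{N}^{1/p-1}}{2m_{N}}\geq \tfrac{1}{4}M_{N}^{1/p}.
\end{equation*}
Consequently $\delta _{N}:=\int_{I_{N+1}}\left\vert \sigma _{M_{N+1}}a_{N}\right\vert \,d\mu \geq \tfrac{1}{4}M_{N}^{1/p}/M_{N+1}\gtrsim M_{N}^{1/p-1}$.

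Finally I would bound $\left\Vert \left\vert \sigma _{M_{N+1}}a_{N}\right\vert \right\Vert _{H_{p}}$ from below through the dyadic maximal function of the nonnegative function $g_{N}:=\left\vert \sigma _{M_{N+1}}a_{N}\right\vert $. For $0\leq j\leq N$ and $x\in I_{j}\setminus I_{j+1}$ one has $I_{j}(x)=I_{j}\supseteq I_{N+1}$, so $g_{N}^{\ast }(x)\geq M_{j}\int_{I_{j}}g_{N}\,d\mu \geq M_{j}\delta _{N}$; integrating $\bigl( g_{N}^{\ast }\bigr) ^{p}$ over the disjoint annuli $I_{j}\setminus I_{j+1}$, $0\leq j\leq N$, each of $\mu $-measure $M_{j}^{-1}(1-1/m_{j})\geq (2M_{j})^{-1}$, gives
\begin{equation*}
\left\Vert g_{N}\right\Vert _{H_{p}}^{p}\geq \int_{G_{m}}\bigl( g_{N}^{\ast }\bigr) ^{p}d\mu \gtrsim \delta _{N}^{p}\sum_{j=0}^{N}M_{j}^{p-1}\gtrsim \sum_{j=0}^{N}\Bigl( \frac{M_{N}}{M_{j}}\Bigr) ^{1-p}.
\end{equation*}
For $0<p<1$ already the term $j=0$ yields $\left\Vert g_{N}\right\Vert _{H_{p}}\gtrsim M_{N}^{1/p-1}\rightarrow \infty $ (equivalently, $\left\Vert g_{N}\right\Vert _{H_{p}}\geq \left\Vert g_{N}\right\Vert _{1}\geq \delta _{N}\rightarrow \infty $); for $p=1$ the sum equals $N+1$, so $\left\Vert g_{N}\right\Vert _{H_{1}}\gtrsim N\rightarrow \infty $. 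In both cases $\left\Vert a_{N}\right\Vert _{H_{p}}\leq c_{p}$ while $\left\Vert \left\vert \sigma _{M_{N+1}}a_{N}\right\vert \right\Vert _{H_{p}}\rightarrow \infty $, which is the claim.

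The argument is short; the main thing to get right is the order of magnitude of the sum $\sum_{i=M_{N}}^{M_{N+1}-1}(M_{N+1}-i)$, which is of order $M_{N}^{2}$ — bounding it merely by its number of terms, $(m_{N}-1)M_{N}$, would make $\sigma _{M_{N+1}}a_{N}$ on $I_{N+1}$ look of size $M_{N}^{1/p-1}$ rather than $M_{N}^{1/p}$, and the whole estimate would then collapse (for instance when $p=1$). The blow-up itself is the familiar ``single spike seen on all scales'': a nonnegative function supported essentially on the tiny set $I_{N+1}$ has maximal function comparably large on each of the $N+1$ annuli $I_{j}\setminus I_{j+1}$, $0\leq j\leq N$; this is already registered by $\left\Vert \cdot \right\Vert _{1}$ — hence by $\left\Vert \cdot \right\Vert _{H_{p}}$ — when $0<p<1$, and it produces the extra factor $N$ when $p=1$, so the single family $\{a_{N}\}$ settles the full range $0<p\leq 1$ at once.
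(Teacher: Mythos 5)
Your proof is correct and follows essentially the same route as the paper: the extremal function is the same block $D_{M_{N+1}}-D_{M_{N}}$ (you merely normalize it to a $p$-atom), the key computation $\sigma _{M_{N+1}}a_{N}\gtrsim M_{N}^{1/p}$ on $I_{N+1}$ is the paper's estimate (\ref{3}) in disguise, and the lower bound via the maximal function on the annuli $I_{j}\setminus I_{j+1}$, $0\leq j\leq N$, is exactly the paper's (\ref{5})--(\ref{6}). Your write-up is in fact cleaner in its bookkeeping (the paper's indices $s$ versus $N$ are garbled in (\ref{6})), but there is no genuinely new idea, so nothing further to compare.
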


\begin{proof}[Proof of Theorem 2.]
Let%
\begin{equation*}
f_{A}=D_{M_{A+1}}-D_{M_{_{A}}}.
\end{equation*}

It is evident that
\begin{equation*}
\widehat{f}_{A}\left( i\right) =\left\{
\begin{array}{l}
\text{ }1,\text{ if }i=M_{_{A}},...,M_{A+1}-1, \\
\text{ }0,\text{otherwise}.%
\end{array}%
\right.
\end{equation*}%
Then we can write
\begin{equation}
S_{i}f_{A}=\left\{
\begin{array}{l}
D_{i}-D_{M_{_{A}}},\text{ \ \ \ \ if }i=M_{A},...,M_{A+1}-1, \\
\text{ }f_{A},\text{ \ \ \ \ \ \ \ \ \ \ \ \ \ \ \ if }i\geq M_{A+1}, \\
0,\text{ \qquad\ \ \ \ \ \ \ \ \ \ otherwise}.%
\end{array}%
\right.  \label{4}
\end{equation}

From (\ref{2}) we get that (c.f. \cite{tep2} and \cite{tep3})
\begin{equation}
\left\Vert f_{A}\right\Vert _{H_{p}}=\left\Vert \sup\limits_{n\in \mathbb{P}%
}S_{M_{n}}\left( f_{A}\right) \right\Vert _{p}=\left\Vert f_{A}\right\Vert
_{p}\leq M_{A}^{1-1/p}.  \label{111}
\end{equation}%
Let $x\in I_{A+1}.$ Applying (\ref{4}), we obtain that
\begin{eqnarray}
&&\sigma _{M_{A+1}}f_{A}\left( x\right) =\frac{1}{M_{A+1}}\overset{M_{A+1}}{%
\underset{j=0}{\sum }}S_{j}f_{A}\left( x\right) =\frac{1}{M_{A+1}}\overset{%
M_{A+1}}{\underset{j=M_{A}+1}{\sum }}S_{j}f_{A}\left( x\right)  \label{3} \\
&=&\frac{1}{M_{A+1}}\overset{M_{A+1}}{\underset{j=M_{A}}{\sum }}\left(
D_{_{j}}\left( x\right) -D_{M_{A}}\left( x\right) \right) =\frac{1}{M_{A+1}}%
\overset{M_{A+1}}{\underset{j=M_{A}}{\sum }}\left( j-M_{A}\right)  \notag \\
&=&\frac{1}{M_{A+1}}\overset{\left( m_{A}-1\right) M_{A}}{\underset{j=0}{%
\sum }}j\geq cM_{A}.  \notag
\end{eqnarray}%
By using (\ref{3}), we find that%
\begin{eqnarray}
S_{M_{N}}\left( \left\vert \sigma _{M_{A+1}}f_{A}\right\vert ;x\right)
&=&\int_{G_{m}}\left\vert \sigma _{M_{A+1}}f_{A}\left( t\right) \right\vert
D_{M_{N}}\left( x-t\right) d\mu \left( t\right)  \label{5} \\
&\geq &\int_{I_{A+1}}\left\vert \sigma _{M_{A+1}}f_{A}\left( x\right)
\right\vert D_{M_{N}}\left( x-t\right) d\mu \left( t\right)  \notag \\
&\geq &cM_{A}\int_{I_{A+1}}D_{M_{N}}\left( x-t\right) d\mu \left( t\right) .
\notag
\end{eqnarray}

According to (\ref{5}), we have that%
\begin{equation*}
S_{M_{N}}\left( \left\vert \sigma _{M_{A+1}}f_{A}\right\vert ;x\right) \geq
cD_{M_{N}}\left( x\right) ,\text{ \ }N=0,1,...,A,
\end{equation*}%
and%
\begin{equation*}
\sup_{N}S_{M_{N}}\left( \left\vert \sigma _{M_{A+1}}f_{A}\right\vert
;x\right) \geq \sup_{1\leq N<A}S_{M_{N}}\left( \left\vert \sigma
_{M_{A+1}}f_{A}\right\vert ;x\right) \geq c\sup_{1\leq N<A}D_{M_{N}}\left(
x\right) .
\end{equation*}%
Let $x\in I_{N}\backslash I_{N+1},$ for some $s=0,1,...,A.$ Then, from (\ref%
{2}) it follows that%
\begin{equation*}
\sup_{N\in \mathbb{P}}S_{M_{N}}\left( \left\vert \sigma
_{M_{A+1}}f_{A}\right\vert ;x\right) \geq cM_{N}.
\end{equation*}

Let $0<p<1.$ Then
\begin{eqnarray}
&&\left\Vert \left\vert \sigma _{M_{A+1}}f_{A}\right\vert \right\Vert
_{H_{p}}^{p}  \label{6} \\
&=&\left\Vert \sup_{1\leq N<A-1}S_{M_{N}}\left( \left\vert \sigma
_{M_{A+1}}f_{A}\right\vert ;x\right) \right\Vert _{p}^{p}  \notag \\
&\geq &\int_{G_{m}}\left( \sup_{1\leq N<A-1}S_{M_{N}}\left( \left\vert
\sigma _{M_{A+1}}f_{A}\right\vert ;x\right) \right) ^{p}d\mu \left( x\right)
\notag \\
&\geq &\overset{A}{\underset{s=1}{\sum }}\int_{I_{N}\backslash
I_{N+1}}\left( \sup_{1\leq N<A-1}S_{M_{N}}\left( \left\vert \sigma
_{M_{A+1}}f_{A}\right\vert ;x\right) \right) ^{p}d\mu \left( x\right)  \notag
\\
&\geq &c\overset{A}{\underset{s=1}{\sum }}\frac{M_{s}^{p}}{M_{s}}=c_{p}>0.
\notag
\end{eqnarray}

Let $p=1.$ Then we obtain that%
\begin{equation}
\left\Vert \left\vert \sigma _{M_{A+1}}f_{A}\right\vert \right\Vert
_{H_{1}}\geq cA.  \label{7}
\end{equation}

By combining (\ref{111}), (\ref{6}) and (\ref{7}) we can conclude that%
\begin{equation*}
\frac{\left\Vert \left\vert \sigma _{M_{A+1}}f_{A}\right\vert \right\Vert
_{H_{p}}}{\left\Vert f_{A}\right\Vert _{H_{p}}}\geq \frac{c_{p}}{%
M_{A}^{1-1/p}}\rightarrow \infty ,\text{ \ \ as \ \ \ }A\rightarrow \infty ,%
\text{ \ \ }0<p<1
\end{equation*}%
and%
\begin{equation*}
\frac{\left\Vert \left\vert \sigma _{M_{A+1}}f_{A}\right\vert \right\Vert
_{H_{1}}}{\left\Vert f_{A}\right\Vert _{H_{1}}}\geq cA\rightarrow \infty ,%
\text{ \ \ as \ \ \ }A\rightarrow \infty .
\end{equation*}

The proof is complete.
\end{proof}

As the consequence of our result we have the following negative result:

\begin{corollary}
Let $0<p\leq 1.$ Then the maximal operator $\sigma ^{\#}f$ $\ $is not
bounded from the martingale Hardy space $H_{p}$ $\left( G_{m}\right) $ to
the martingale Hardy space $H_{p}$ $\left( G_{m}\right) .$
\end{corollary}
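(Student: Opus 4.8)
The plan is to deduce the unboundedness of the maximal operator $\sigma^{\#}$ on $H_p$ directly from Theorem~2, using the elementary observation that for a single fixed index the maximal operator dominates the individual Fej\'er mean. Concretely, for every martingale $f$ and every $n\in\mathbb{P}$ we have the pointwise bound $\left\vert\sigma_{M_n}f\right\vert\leq\sigma^{\#}f$, and since the $H_p$-quasi-norm of a martingale $g$ is by definition $\left\Vert g^{\ast}\right\Vert_p$, monotonicity of the $L_p$-quasi-norm gives that it suffices to control $(\sigma^{\#}f)^{\ast}$ from below by $\left\vert\sigma_{M_n}f\right\vert^{\ast}$. So the natural route is: if $\sigma^{\#}$ were bounded $H_p\to H_p$, then for the test martingales $f_A=D_{M_{A+1}}-D_{M_A}$ from the proof of Theorem~2 we would get $\left\Vert\left\vert\sigma_{M_{A+1}}f_A\right\vert\right\Vert_{H_p}\leq\left\Vert\sigma^{\#}f_A\right\Vert_{H_p}\leq c_p\left\Vert f_A\right\Vert_{H_p}$, which contradicts the ratio estimate established at the end of Theorem~2.

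In more detail, I would first record that for any martingale $g$ and any measurable function $h$ with $\left\vert h\right\vert\leq g^{\ast}$ pointwise, one has $\left\Vert h\right\Vert_{H_p}\leq\left\Vert g\right\Vert_{H_p}$; apply this with $g=\sigma^{\#}f_A$ (viewed as the martingale of its conditional expectations, i.e.\ $g^{(k)}=S_{M_k}(\sigma^{\#}f_A)$) and $h=\sigma_{M_{A+1}}f_A$, noting $\left\vert\sigma_{M_{A+1}}f_A\right\vert\leq\sigma^{\#}f_A=(\sigma^{\#}f_A)^{\ast}$ since the latter is already $\digamma_{\infty}$-measurable and nonnegative. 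Actually the cleanest formulation avoids even this subtlety: the maximal function of the martingale generated by $\left\vert\sigma_{M_{A+1}}f_A\right\vert$ is dominated termwise by the maximal function of the martingale generated by $\sigma^{\#}f_A$, because $S_{M_N}(\left\vert\sigma_{M_{A+1}}f_A\right\vert)\leq S_{M_N}(\sigma^{\#}f_A)$ for each $N$ (the kernel $D_{M_N}$ is nonnegative). Hence $\left\Vert\left\vert\sigma_{M_{A+1}}f_A\right\vert\right\Vert_{H_p}\leq\left\Vert\sigma^{\#}f_A\right\Vert_{H_p}$.

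Combining this with the hypothetical boundedness $\left\Vert\sigma^{\#}f_A\right\Vert_{H_p}\leq c_p\left\Vert f_A\right\Vert_{H_p}$ yields $\left\Vert\left\vert\sigma_{M_{A+1}}f_A\right\vert\right\Vert_{H_p}\leq c_p\left\Vert f_A\right\Vert_{H_p}$, i.e.\ $\left\vert\sigma_{M_n}\cdot\right\vert$ would be bounded $H_p\to H_p$ along the sequence $f_A$, directly contradicting the displayed divergence
\begin{equation*}
\frac{\left\Vert\left\vert\sigma_{M_{A+1}}f_A\right\vert\right\Vert_{H_p}}{\left\Vert f_A\right\Vert_{H_p}}\rightarrow\infty\quad(A\rightarrow\infty)
\end{equation*}
obtained in the proof of Theorem~2 (both for $0<p<1$ and for $p=1$). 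Therefore $\sigma^{\#}$ is not bounded from $H_p(G_m)$ to $H_p(G_m)$ for $0<p\leq1$.

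I do not expect any real obstacle here; the corollary is essentially a one-line consequence of Theorem~2 once one notes the termwise domination of the maximal functions. The only point requiring a small remark is the identification of $\left\vert\sigma_{M_{A+1}}f_A\right\vert$ and $\sigma^{\#}f_A$ with genuine martingales (via their $\digamma_k$-conditional expectations $S_{M_k}(\cdot)$) so that their $H_p$-quasi-norms are defined; this is exactly the normalization already used throughout the proof of Theorem~2, so nothing new is needed.
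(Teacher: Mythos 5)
Your argument is correct and is exactly the intended one: the paper states the corollary without proof as an immediate consequence of Theorem~2, and your chain $S_{M_N}(\left\vert\sigma_{M_{A+1}}f_A\right\vert)\leq S_{M_N}(\sigma^{\#}f_A)$ (by nonnegativity of $D_{M_N}$), hence $\left\Vert\left\vert\sigma_{M_{A+1}}f_A\right\vert\right\Vert_{H_p}\leq\left\Vert\sigma^{\#}f_A\right\Vert_{H_p}$, combined with the divergent ratio from Theorem~2, is precisely the missing one-line deduction.
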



\begin{thebibliography}{99}
\bibitem{AVD} Agaev, G. N., Vilenkin, N. Ya., Dzhsafarly, G. M.,
Rubinshtein, A. I.: Multiplicative systems of functions and harmonic
analysis on zero-dimensional groups, Baku, Ehim, 1981 (in Russian).

\bibitem{BGG2} Blahota, I., Gát, G., G., Goginava, U.: Maximal operators of
Fejér means of Vilenkin-Fourier series, JIPAM. J. Inequal. Pure Appl. Math.,
7 (2006), no. 4, Article 149, 7 pp. (electronic).

\bibitem{Fu} Fujii, N. J.: A maximal inequality for $H^{1}$-functions on a
generalized Walsh-Paley group, Proc. Amer. Math. Soc., 77 (1979), no. 1,
111--116.

\bibitem{Go} Goginava, U.: The maximal operator of Marcinkiewicz-Fejér means
of the d-dimensional Walsh-Fourier series, (English summary) East J.
Approx., 12 (2006), no. 3, 295--302.

\bibitem{GoAMH} Goginava, U.: Maximal operators of Fejér means of double
Walsh-Fourier series, Acta Math. Hungar., 115 (2007), no. 4, 333--340.

\bibitem{gog1} Goginava, U.: A note on the Walsh-Fejér means. (English
summary) Anal. Theory Appl. 26 (2010), no. 4, 320--325.

\bibitem{PS} Pál J., Simon, P.: On a generalization of the concept of
derivative, Acta Math. Acad. Sci. Hungar., 29 (1977), no. 1-2, 155--164.

\bibitem{Sc} Schipp, F.: Certain rearrangements of series in the Walsh
system, (Russian) Mat. Zametki, 18 (1975), no. 2, 193--201.

\bibitem{Si1} Simon, P.: Ces\`{a}ro summability with respect to
two-parameter Walsh systems, Monatsh. Math., 131 (2000), no. 4, 321--334.

\bibitem{Si2} Simon, P.: Investigations with respect to the Vilenkin system,
Ann. Univ. Sci. Budapest, Eőtvős Sect. Math., 27 (1984), 87--101 (1985).

\bibitem{tep1} Tephnadze, G.: Fejér means of Vilenkin-Fourier series, Studia
Sci. Math. Hungar., 49 (2012), no. 1, 79--90.

\bibitem{tep2} Tephnadze, G.: On the maximal operators of Vilenkin-Fejér
means, Turkish J. Math., 37 (2013), no. 2, 308--318.

\bibitem{tep3} Tephnadze, G.: On the maximal operators of Vilenkin-Fejér
means on Hardy spaces, Math. Inequal. Appl., 16 (2013), no. 1, 301--312.

\bibitem{tep18} Tephnadze, G.: A note on the norm convergence by Vilenkin-Fej%
ér means, Georgian Math. J., (to appear).

\bibitem{Vi} Vilenkin, N. Ya.: On a class of complete orthonormal systems,
(Russian) Bull. Acad. Sci. URSS. S\`{e}r. Math., [Izvestia Akad. Nauk SSSR]
11, (1947). 363--400.

\bibitem{We1} Weisz, F.: Martingale Hardy spaces and their applications in
Fourier analysis, Lecture Notes in Mathematics, 1568. Springer-Verlag,
Berlin, 1994.

\bibitem{We2} Weisz, F.: Ces\`{a}ro summability of one- and two-dimensional
Walsh-Fourier series, Anal. Math., 22 (1996), no. 3, 229--242.

\bibitem{we4} Weisz, F.: Weak type inequalities for the Walsh and bounded
Ciesielski systems, (English summary) Anal. Math., 30 (2004), no. 2,
147--160.

\bibitem{we2} Weisz, F.: Ces\`{a}ro summability of two-dimensional
Walsh-Fourier series, (English summary) Trans. Amer. Math., Soc. 348 (1996),
no. 6, 2169--2181.
\end{thebibliography}
\end{document}